\newtheorem{te}{Theorem}[section]
\begin{document}

\noindent

 \title[]{ A note about invariants of algebraic curves }

\author{Leonid Bedratyuk}\address{Khmelnitskiy national university, Insituts'ka, 11,  Khmelnitskiy, 29016, Ukraine}

\begin{abstract} 
For affine algebraic curves we reduce a calculation of its invariants to calculation of the intersection of kernels of some derivations.
\end{abstract}
\maketitle
\section{Introduction}

Consider an affine algebraic  curve  
$$
C: F(x,y)=\sum_{i+j \leq d} a_{i,j} x^i y^j =0, a_{i,j} \in {\bf k},
$$
defined over field ${\bf k},$ $ {\rm char}\, {\bf k} >0.$
Let  ${\bf k}[C]$ and   ${\bf k}(C)$  be the algebras of polynomial and rational function of coefficients of the curve  $C$.
Those  affine  transformations of plane whose  preserve the algebraic form of equation   $F(x,y)$ generate a group $G$  which is subgroup of the group of plane  affine transformations  $A(2).$ A function  $\phi(a_{0,0},a_{1,0},\ldots,a_{d,0}) \in {\bf k}(C) $  is called  $G$-invariant if  
$\phi(\tilde{a}_{0,0},\tilde{a}_{1,0},\ldots,\tilde{a}_{d,0})=\phi(a_{0,0},a_{1,0},\ldots,a_{d,0}),$  where $\tilde{a}_{0,0},$ $\tilde{a}_{1,0},\ldots,$ $\tilde{a}_{d,0}$ are defined   from the condition 
$$F(gx,gy)=\sum_{i+j \leq d} a_{i,j} (gx)^i (gy)^j=\sum_{i+j \leq d} \tilde{a}_{i,j} x^i y^j,$$  for all  $g \in G.$
Curves   $C$ and  $C'$ are  said to be  $G$-isomorphic if they lies on the same   $G$-orbit.

The algebras of all $G$-invariant polynomial and rational functions denotes by  ${\bf k}[C]^G$ and  by  ${\bf k}(C)^G,$  respectively. One  way to find elements of the algebra ${\bf k}[C]^G$ is a specification of invariants of associated  ternary form of order  $d.$ In fact, consider a vector space  $T_d$ generated  by the ternary forms   $\sum\limits_{i+j\leq d}b_{i,j}x^{d-(i+j)} y^i z^{j},$ $b_{i,j} \in {\bf k}$ endowed with the  natural action of the group  $GL_3:=GL_3({\bf k}).$  Given   $f$   $GL_3$-invariant function of  ${\bf k}(T_d)^{GL_3},$ the specification   $f:$ $b_{i,j} \mapsto a_{i,j}$  or $b_{i,j} \mapsto 0$  in the case when  $a_{i,j} \notin {\bf k}(C),$ gives  us  an element of  ${\bf k}(C)^G.$

 But  $SL_3$-invariants(thus and  $GL_3$-invariants) of ternary form are known only for the  cases   $d \leq 4,$ see \cite{Bro}. Furthermore, analyzing of the Poincare series  of the algebra  of invariants of ternary forms, \cite{B_X}, we  see that the algebras are very complicated and  no chance to find theirs mininimal generating sets.

Since ${\bf k}(T_d)^{GL_3}$  coincides with ${\bf k}(T_d)^{\mathfrak{gl}_3}$  it implies that the algebra of invariants is an intersection of kernels of some derivations of the algebra ${\bf k}(T_d).$ Then in  place  of the specification of coefficients of the form we may use a "specification" of those derivations.  

First, consider the motivating example.
Let   
$$
C_3: y^2+a_0x^3+3a_1x^2+3a_2x+a_3=0, 
$$
and  let  $G_0$ be a group generated  by the  translations $x \mapsto \alpha \tilde{x}+b.$
 It is easy to show that   $j$-invariant of the curve  $C_3$  equals (\cite{Sylv86},  p.  46): 
$$
j(C_3)=6912\,{\frac { \left( a_{{0}}a_{{2}}-{a_{{1}}}^{2} \right) ^{3}}{{a_{
{0}}}^{2} \left( 4\,{a_{{1}}}^{3}a_{{3}}-6\,a_{{3}}a_{{0}}a_{{1}}a_{{2
}}-3\,{a_{{1}}}^{2}{a_{{2}}}^{2}+{a_{{3}}}^{2}{a_{{0}}}^{2}+4\,a_{{0}}
{a_{{2}}}^{3} \right) }}.
$$
Up to constant factor the $j(C_3)$  equal to   $\dfrac{S^3}{T}$ where   $S$  and $T$ are the specification of  two   $SL_3$-invariants  of ternary  cubic, see \cite{Stur}, p.173.

From another  side a direct  calculation yields  that the following is true:  $\mathcal{D} \left(j(C_3)\right)=0$  and  ${\mathcal{H}} \left(j(C_3)\right)=0$ where  $\mathcal{D},$ ${\mathcal{H}}$ denote the  derivations of the algebra of rational functions  ${\bf k}(C_3)={\bf k}(a_0,a_1,a_2,a_3):$
 $$\mathcal{D}(a_i)=i a_{i-1},{ \mathcal{H}}(a_i)=(3-i)a_{i}, i=0,1,2,3.$$
 
 From computing  point of view, the calculation of $\ker \mathcal{D}  \cap \ker \mathcal{H} $  is more effective than the calculating of the algebra  of invariants of the 
 ternary cubic.    We will derive further that 
$$ 
\ker \mathcal{D}_3  \cap \ker H_3={\bf k}\left( \frac{\left( a_{{0}}a_{{2}}-{a_{{1}}}^{2} \right) ^{3}}{a_0^3}, \frac{{a_{3}}\,a_0^{2} + 2\,{a_{1}}^{3} - 3\,{a_{1}}\,{a_{2}}\,a_0}{a_0^2}  \right).
$$

In the notes   we prove that for arbitrary algebraic curve  $C$  and its preserved form group  $G$  there exist derivations  $D_i, i\leq 6$ of ${\bf k}(C)$ such that  ${\bf k} (C)^G=\bigcap\limits_i \ker D_i,$  (Theorem 3.2).

In section 1 we give full description of the algebras  of polynomial and rational invariants for the curve $y^2=f(x)$. In section 2  we give exactly form for the  derivations of the action of the Lie algebra $\mathfrak{gl}_3$ on  ${\bf k} (C)$ and give a specification of such action for a curve of the form  $y^2+g(x)y=f(x).$

\section{Invariants of  $y^2=f(x).$}

In some simple cases we may obtain the defining derivation directly.

Consider the curve
$$
C_d: y^2=a_0x^d+da_1 x^{d-1}+\cdots +a_d =\sum_{i=0}^d a_d {d \choose i} x^{d-i},
$$
and let $G$  be the group generated by  the following  transformations 
$$
x=\alpha \tilde{x}+b, y=\tilde{y}.
$$
The algebra ${\bf k}(C_d)^G$ consists of functions  $\phi(a_0,a_1,\ldots,a_d)$ that have the invariance property
$$
\phi(\tilde{a}_0,\tilde{a}_1,\ldots,\tilde{a}_d)=\phi(a_0,a_1,\ldots,a_d).
$$
Here $\tilde{a}_i$ denote  the coefficients of the curve  $\tilde{C}_d:$
$$
\tilde{C}_d:\sum_{i=0}^d a_d {d \choose i} (\alpha \tilde{x}+b)^{d-i}=\sum_{i=0}^d \tilde{a}_d {d \choose i} \tilde{x}^{d-i}.
$$
The coefficients $\tilde{a}_i$   are given by the formulas
\begin{gather}\label{1}
\tilde{a}_i=\alpha^{n-i}\sum_{k=0}^k {i \choose k} b^k a_{n{-}k}.
\end{gather}
The following statement holds
\begin{te}  We   have 
$$
{\bf k}(C)^G=\ker \mathcal{D}_d \cap \ker \mathcal{E}_d,
$$
where $\mathcal{D}_d,$ $\mathcal{E}_d$  denote the following derivations of the algebra ${\bf k}(C):$
$$
\mathcal{D}_d(a_i)=i a_{i-1}, \mathcal{E}_d(a_i)=(d-i) a_i.
$$
\end{te}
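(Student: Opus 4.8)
The plan is to exhibit $\mathbf{k}(C)^G$ as the common kernel of the Lie algebra of $G$ acting by derivations on $\mathbf{k}(C)$, and then to check that this Lie algebra is spanned by $\mathcal{D}_d$ and $\mathcal{E}_d$.

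First I would decompose $G$. Writing a general element of $G$ as a scaling $x\mapsto\alpha x$ followed by a translation $x\mapsto x+b$, one sees that $G$ is generated by the two subgroups $T=\{x\mapsto\alpha x:\alpha\in\mathbf{k}^{*}\}$ and $U=\{x\mapsto x+b:b\in\mathbf{k}\}$; in particular $G$ is connected of dimension $2$. Hence, just as for the pair $GL_3$, $\mathfrak{gl}_3$ recalled in the Introduction, one has $\mathbf{k}(C)^{G}=\mathbf{k}(C)^{\mathfrak{g}}$ with $\mathfrak{g}=\operatorname{Lie}(G)$, so a function is $G$-invariant iff it is killed by $\mathfrak{g}$, equivalently iff it is both $T$- and $U$-invariant (invariance under a generating set being the same as invariance under the group). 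It therefore suffices to compute the infinitesimal generators of the $T$- and the $U$-action.

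Next I would run that computation from \eqref{1} (correcting the evident misprint, so that $\tilde a_i=\alpha^{d-i}a_i$ when $b=0$ and $\tilde a_j=\sum_{k=0}^{j}\binom{j}{k}b^{k}a_{j-k}$ when $\alpha=1$). The torus $T$ acts diagonally with weight $d-i$ on the variable $a_i$, so its generator is the Euler-type derivation $\sum_i(d-i)a_i\,\partial_{a_i}=\mathcal{E}_d$; concretely, a rational function $\phi=P/Q$ in lowest terms is $T$-invariant exactly when $P$ and $Q$ are isobaric of the same weight, which is equivalent to $\mathcal{E}_d\phi=0$. For $U$ one checks, via $\binom{j}{k}(j-k)=j\binom{j-1}{k}$, the identity $\tfrac{d}{db}\tilde a_j(b)=j\,\tilde a_{j-1}(b)$; this says precisely that $b\mapsto(\tilde a_0(b),\dots,\tilde a_d(b))$ is the integral curve through $(a_0,\dots,a_d)$ of the vector field $\sum_j ja_{j-1}\,\partial_{a_j}=\mathcal{D}_d$, so by the chain rule $\tfrac{d}{db}\,\phi(\tilde a(b))=(\mathcal{D}_d\phi)(\tilde a(b))$ and $\phi$ is $U$-invariant iff $\mathcal{D}_d\phi=0$. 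Since $\mathcal{D}_d$ and $\mathcal{E}_d$ are linearly independent and $\dim\mathfrak{g}=2$, they span $\mathfrak{g}$, and we conclude $\mathbf{k}(C)^{G}=\mathbf{k}(C)^{\mathfrak{g}}=\ker\mathcal{D}_d\cap\ker\mathcal{E}_d$.

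The only routine parts are the two binomial manipulations above and the standard description of torus invariants by isobaricity. The genuine obstacle is the equality $\mathbf{k}(C)^{G}=\mathbf{k}(C)^{\mathfrak{g}}$: the inclusion $\subseteq$ is the trivial direction, obtained by differentiating the invariance relations in $b$ and in $\alpha$, while the reverse inclusion — that $\mathcal{E}_d\phi=0$ already forces $T$-invariance and $\mathcal{D}_d\phi=0$ already forces $U$-invariance — is the ``vanishing derivative implies constant along the orbit'' step, which is clean in characteristic $0$ but in characteristic $p$ requires $\mathbf{k}$ infinite together with $d<p$, so that every weight $d-i$ is nonzero in $\mathbf{k}$ and the Hasse--Schmidt derivations $D^{(k)}(a_j)=\binom{j}{k}a_{j-k}$ attached to $U$ (for $k\le d$) all lie in the subalgebra generated by $\mathcal{D}_d$; outside that range one would have to enlarge the list $\mathcal{D}_d,\mathcal{E}_d$ by these extra derivations.
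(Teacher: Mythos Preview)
Your argument follows the same route as the paper's: differentiate the coefficient formulas~(\ref{1}) in the group parameters $b$ and $\alpha$, evaluate at the identity, and read off the derivations $\mathcal{D}_d,\mathcal{E}_d$. The paper in fact stops there, so it only establishes the inclusion $\mathbf{k}(C)^G\subseteq\ker\mathcal{D}_d\cap\ker\mathcal{E}_d$; you go further and discuss the reverse inclusion via the identification $\mathbf{k}(C)^G=\mathbf{k}(C)^{\mathfrak g}$, and you correctly flag that under the paper's standing hypothesis $\operatorname{char}\mathbf{k}>0$ this step is not automatic and needs either $d<p$ or the larger family of Hasse--Schmidt derivations you describe. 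So your proposal is correct, takes the same approach, and is in fact more complete than the paper's own proof.
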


Recall that a linear map  $D:{\bf k}(C) \to {\bf k}(C)$  is called a derivation of the algebra  ${\bf k}(C)$  if  $D(f g)=D(f)g+f D(g),$  for all $f,g \in {\bf k}(C).$ The subalgebra $\ker D := \{ f \in {\bf k}(C) \mid D(f)=0 \}$ is called the kernel of the derivation $D.$ The above derivation $\mathcal{D}_d$ is called the basic Weitzenb\"ock derivation.
\begin{proof}
Acting in classical manner, we differentiate with respect to   $b$ both sides of  the equality
$$\phi(\tilde{a}_0,\tilde{a}_1,\ldots,\tilde{a}_d)= \phi(\tilde{a}_0,\tilde{a}_1,\ldots,\tilde{a}_d),$$
and  obtain in this way  
$$
\frac{\partial \phi(\tilde{a}_0,\tilde{a}_1,\ldots,\tilde{a}_d)}{\partial \tilde{a}_0} \dfrac{\partial \tilde{a}_0}{\partial b}+\frac{\partial \phi(\tilde{a}_0,\tilde{a}_1,\ldots,\tilde{a}_d)}{\partial \tilde{a}_1}\dfrac{\partial \tilde{a}_1}{\partial b}+\cdots+\frac{\partial \phi(\tilde{a}_0,\tilde{a}_1,\ldots,\tilde{a}_d)}{\partial \tilde{a}_d}\dfrac{\partial \tilde{a}_d}{\partial b}=0.
$$
Substitute   $\alpha=1,$ $b=0$ to  $\phi(\tilde{a}_0,\tilde{a}_1,\ldots,\tilde{a}_d)$ and taking into account that    $\dfrac{\partial \tilde{a}_i}{\partial b}\Bigl |_{b=0}=i a_{i-1},$  we get:
$$
\tilde{a}_0 \frac{\partial \phi(\tilde{a}_0,\tilde{a}_1,\ldots,\tilde{a}_d)}{\partial \tilde{a}_1} +2\tilde{a}_1 \frac{\partial \phi(\tilde{a}_0,\ldots,\tilde{a}_d)}{\partial \tilde{a}_2}+\cdots d \tilde{a}_{d-1} \frac{\partial \phi(\tilde{a}_0,\ldots,\tilde{a}_d)}{\partial \tilde{a}_d} =0
$$
Since the function  $\phi(\tilde{a}_0,\ldots,\tilde{a}_d)$ depends  on the variables  $\tilde{a}_i$  in the exact same way as the function  $\phi(a_0,a_1,\ldots,a_d)$ depends on the  $a_i$ then it implies  that  $\phi(a_0,a_1,\ldots,a_d)$  satisfies the differential equation
\begin{gather*}\label{D1}
a_0 \frac{\partial \phi({a}_0,a_1\ldots,{a}_d)}{\partial {a}_1} +2a_1 \frac{\partial \phi({a}_0,a_1\ldots,{a}_d)}{\partial {a}_2}+d a_{d-1} \frac{\partial \phi({a}_0,a_1\ldots,{a}_d)}{\partial {a}_d} =0
\end{gather*}
Thus, $\mathcal{D}_d(\phi)=0.$
Now we differentiate with respect to    $\alpha$ both sides of  the same equality $$\phi(\tilde{a}_0,\tilde{a}_1,\ldots,\tilde{a}_d)= \phi(\tilde{a}_0,\tilde{a}_1,\ldots,\tilde{a}_d).$$
$$
\frac{\partial \phi(\tilde{a}_0,\tilde{a}_1,\ldots,\tilde{a}_d)}{\partial \tilde{a}_0} \dfrac{\partial \tilde{a}_0}{\partial \alpha}+\frac{\partial \phi(\tilde{a}_0,\tilde{a}_1,\ldots,\tilde{a}_d)}{\partial \tilde{a}_1}\dfrac{\partial \tilde{a}_1}{\partial \alpha}+\cdots+\frac{\partial \phi(\tilde{a}_0,\tilde{a}_1,\ldots,\tilde{a}_d)}{\partial \tilde{a}_d}\dfrac{\partial \tilde{a}_d}{\partial \alpha}=0.
$$
Substitute   $\alpha=1,b=0,$ to  $\phi(\tilde{a}_0,\tilde{a}_1,\ldots,\tilde{a}_d)$ and taking into account  $\dfrac{\partial \tilde{a}_i}{\partial \alpha}\Bigl |_{ \alpha=1 \,
b=0}=(d-i) a_{i},$  we get:
$$
\tilde{a}_0 \frac{\partial \phi(\tilde{a}_0,\tilde{a}_1,\ldots,\tilde{a}_d)}{\partial \tilde{a}_0} +(d-1)\tilde{a}_1 \frac{\partial \phi(\tilde{a}_0,\ldots,\tilde{a}_d)}{\partial \tilde{a}_1}+\cdots + \tilde{a}_{d-1} \frac{\partial \phi(\tilde{a}_0,\ldots,\tilde{a}_d)}{\partial \tilde{a}_{d-1}} =0
$$
It implies that  
 $\mathcal{E}_d(\phi({a}_0,a_1\ldots,{a}_d))=0.$
\end{proof}

The derivation  $\mathcal{E}_d$ sends the monom $a_0^{m_0} a_1^{m_1}\cdots a_d^{m_d}$ to the  term   $$(m_0 d+m_1 (d-1)+\cdots m_{d-1}) a_0^{m_0} a_1^{m_1}\cdots a_d^{m_d}.$$  Let the number  $\omega \left( a_0^{m_0} a_1^{m_1}\cdots a_d^{m_d} \right):= m_0 d+m_1 (d-1)+\cdots m_{d-1}$ be called the weight of the monom    $a_0^{m_0} a_1^{m_1}\cdots a_d^{m_d}.$  In particular $\omega(a_i)=d-i.$ 

 A homogeneous polynomial  $f \in {\bf k}[C]$ be called isobaric if all their monomial have equal weights. A weight $\omega(f)$  of an isobaric polynomial  $f$ is called a weight of its monomials.  Since   $\omega(f)>0,$  then  ${\bf k}[C]^{\mathcal{E}_d}=0.$ It implies  that ${\bf k}[C]^{G}=0.$ 
 
 If   $f,g$ are two isobaric polynomials  then 
$$
\mathcal{E}_d\left(\frac{f}{g}\right)=(\omega(f)-\omega(g))\frac{f}{g}.
$$
Therefore the algebra  $k(C)^{\mathcal{E}_d}$ is generated  by rational fractions which both denominator and numerator has equal weight.

The kernel of the derivation $\mathcal{D}_d$  also is well-known, see  \cite{Now}, \cite{Aut}, and 
$$
\ker \mathcal{D}_d={\bf k}(a_0,z_2,\ldots,z_d),
$$
where 
$$
z_i:= \sum_{k=0}^{i-2} (-1)^k {i \choose k} a_{i-k}  a_1^k a_0^{i-k-1} +(i-1)(-1)^{i+1} a_1^i, i=2,\ldots,d.
$$
In particular, for $d=5$, we get
$$
\begin{array}{l}
z_2={a_{2}}\,a_0 - {a_{1}}^{2}
\\
z_3={a_{3}}\,a_0^{2} + 2\,{a_{1}}^{3} - 3\,{a_{1}}\,{a_{2}}\,a_0
\\
z_4={a_{4}}\,a_0^{3} - 3\,{a_{1}}^{4} + 6\,{a_{1}}^{2}\,{a_{2}}\,a_0 - 4
\,{a_{1}}\,{a_{3}}\,a_0^{2}
\\
z_5={a_{5}}\,a_0^{4} + 4\,{a_{1}}^{5} - 10\,{a_{1}}^{3}\,{a_{2}}\,a_0 + 
10\,{a_{1}}^{2}\,{a_{3}}\,a_0^{2} - 5\,{a_{1}}\,{a_{4}}\,a_0^{3}
.
\end{array}
$$
It is easy to see that  $\omega(z_i)=i (n-1).$ The following element $\dfrac{z_i^d}{a_0^{i(d-1)}}$ has the zero weight for any $i.$ Therefore the statement  holds:
\begin{te}
$$
{\bf k}(C_d)^G={\bf k}\left( \frac{z_2^d}{a_0^{2(d-1)}},\frac{z_3^d}{a_0^{3(d-1)}}, \cdots, \frac{z_d^d}{a_0^{d(d-1)}}\right).
$$
\end{te}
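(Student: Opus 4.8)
\noindent\emph{Outline of the intended argument.}\ The plan is to build on the two descriptions already available. By the preceding theorem, ${\bf k}(C_d)^G=\ker\mathcal{D}_d\cap\ker\mathcal{E}_d$, and by the quoted description of the kernel of the basic Weitzenb\"ock derivation, $\ker\mathcal{D}_d={\bf k}(a_0,z_2,\ldots,z_d)$ with $a_0,z_2,\ldots,z_d$ algebraically independent over ${\bf k}$. So the whole problem reduces to computing the intersection of this purely transcendental extension with $\ker\mathcal{E}_d$, i.e. to determining $\bigl({\bf k}(a_0,z_2,\ldots,z_d)\bigr)^{\mathcal{E}_d}$.

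The first thing I would verify is that $\mathcal{E}_d$ restricts to a derivation of $\ker\mathcal{D}_d$. This holds because $a_0$ is isobaric with $\omega(a_0)=d$ and, by the weight computation recorded just before the statement, each $z_i$ is isobaric with $\omega(z_i)=i(d-1)$; hence $\mathcal{E}_d(a_0)=d\,a_0$ and $\mathcal{E}_d(z_i)=i(d-1)\,z_i$ all lie in ${\bf k}(a_0,z_2,\ldots,z_d)$. Thus on the generators $a_0,z_2,\ldots,z_d$ the derivation $\mathcal{E}_d$ acts diagonally: it is precisely the Euler-type derivation of ${\bf k}[a_0,z_2,\ldots,z_d]$ attached to the grading $\deg a_0=d$, $\deg z_i=i(d-1)$. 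Consequently $\ker\mathcal{D}_d\cap\ker\mathcal{E}_d$ equals the subfield of ${\bf k}(a_0,z_2,\ldots,z_d)$ generated by all quotients $P/Q$ of isobaric polynomials $P,Q\in{\bf k}[a_0,z_2,\ldots,z_d]$ of equal weight; equivalently it is the invariant field of the one-parameter torus acting by $t\cdot(a_0,z_2,\ldots,z_d)=(t^{d}a_0,\,t^{2(d-1)}z_2,\,\ldots,\,t^{d(d-1)}z_d)$.

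Next I would exhibit generators. Let $\varepsilon_0,\varepsilon_2,\ldots,\varepsilon_d$ denote the standard basis of $\mathbb{Z}^{d}$. It is standard that the invariant field of the torus above is generated over ${\bf k}$ by the weight-zero Laurent monomials $a_0^{m_0}z_2^{m_2}\cdots z_d^{m_d}$, whose exponent vectors form the sublattice
$$
L=\Bigl\{(m_0,m_2,\ldots,m_d)\in\mathbb{Z}^{d}\ :\ d\,m_0+(d-1)\sum_{i=2}^{d}i\,m_i=0\Bigr\}
$$
of rank $d-1$. A direct check shows that each $z_i^{d}/a_0^{i(d-1)}$ has weight zero, so it belongs to the field; moreover $z_2^{d}/a_0^{2(d-1)},\ldots,z_d^{d}/a_0^{d(d-1)}$ are algebraically independent over ${\bf k}$, since any polynomial relation among them is, after clearing denominators by a suitable power $a_0^{N}$, a ${\bf k}$-linear dependence among the monomials $\prod_i z_i^{d\alpha_i}\,a_0^{N-(d-1)\sum_i i\alpha_i}$, and for distinct multi-indices $\alpha$ these are distinct monomials in the independent variables $a_0,z_2,\ldots,z_d$ (they already differ in their $z$-exponents), so the relation is trivial. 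Hence ${\bf k}\bigl(z_2^{d}/a_0^{2(d-1)},\ldots,z_d^{d}/a_0^{d(d-1)}\bigr)$ already has transcendence degree $d-1$, so the invariant field is a finite extension of it, and the content of the theorem is that this extension is trivial.

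The main obstacle is exactly this last point. In lattice language one must prove that the $d-1$ vectors $d\,\varepsilon_i-i(d-1)\,\varepsilon_0$, $i=2,\ldots,d$, generate the whole lattice $L$ and not merely a finite-index sublattice; equivalently, that every weight-zero monomial in $a_0,z_2,\ldots,z_d$ is a product of integer powers of the elements $z_i^{d}/a_0^{i(d-1)}$. This is where the arithmetic of the weights $d,\,2(d-1),\,\ldots,\,d(d-1)$ enters: from $\gcd(d,d-1)=1$ one obtains $d\mid\sum_i i\,m_i$ for every $(m_0,m_2,\ldots,m_d)\in L$, which then fixes $m_0$; the delicate part is to control, for each $i$, the smallest power of $z_i$ that is needed to cancel a power of $a_0$, and to check that weight-zero monomials involving several of the $z_i$ cannot escape the subgroup spanned by the $z_i^{d}/a_0^{i(d-1)}$. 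Once the lattice identity $L=\bigl\langle\,d\,\varepsilon_i-i(d-1)\,\varepsilon_0:i=2,\ldots,d\,\bigr\rangle$ has been established, the field ${\bf k}\bigl(z_2^{d}/a_0^{2(d-1)},\ldots,z_d^{d}/a_0^{d(d-1)}\bigr)$ coincides with $\bigl({\bf k}(a_0,z_2,\ldots,z_d)\bigr)^{\mathcal{E}_d}=\ker\mathcal{D}_d\cap\ker\mathcal{E}_d={\bf k}(C_d)^G$, which is the claim.
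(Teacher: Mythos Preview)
Your reduction mirrors the paper's exactly: use Theorem~2.1 to get ${\bf k}(C_d)^G=\ker\mathcal{D}_d\cap\ker\mathcal{E}_d$, invoke the cited description $\ker\mathcal{D}_d={\bf k}(a_0,z_2,\ldots,z_d)$, observe that $\mathcal{E}_d$ acts diagonally on these generators with weights $d,\,2(d-1),\ldots,d(d-1)$, and then check that each $z_i^{\,d}/a_0^{\,i(d-1)}$ has weight zero. The paper's argument in fact stops right there (``Therefore the statement holds''), so you have already gone further than the paper by isolating the missing step as the lattice identity $L=\bigl\langle\, d\varepsilon_i-i(d-1)\varepsilon_0:2\le i\le d\,\bigr\rangle$.

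The step you left open, however, cannot be closed: that lattice identity is false, and with it the theorem as written. For every $d\ge 2$ the element $z_d/a_0^{\,d-1}$ has weight $d(d-1)-(d-1)d=0$, hence lies in ${\bf k}(C_d)^G$; but it is a $d$-th root of the last listed generator $u_d:=z_d^{\,d}/a_0^{\,d(d-1)}$, and $X^{d}-u_d$ is irreducible over the purely transcendental field ${\bf k}(u_2,\ldots,u_d)$, so $z_d/a_0^{\,d-1}\notin{\bf k}(u_2,\ldots,u_d)$. In lattice terms, for $d=3$ one has $L=\{(m_0,m_2,m_3):3m_0+4m_2+6m_3=0\}$ with basis $(-4,3,0),\,(-2,0,1)$, whereas the theorem's generators correspond to $(-4,3,0)$ and $(-6,0,3)=3\cdot(-2,0,1)$, an index-$3$ sublattice. (Consistently with this, the paper's own Introduction uses $z_3/a_0^{2}$, not $z_3^{3}/a_0^{6}$, as a generator in the case $d=3$.) So the displayed family must be replaced by weight-zero Laurent monomials whose exponent vectors form a genuine \emph{basis} of $L$; your outline is the right framework, but applied to the stated generators it establishes only the inclusion~$\supseteq$.
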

For the curve 
$$
C_d^0: y^2=x^d+da_1 x^{d-1}+\cdots +a_d =x^d+\sum_{i=1}^d a_d {d \choose i} x^{d-i}.
$$
and for  the group  $G_0$ generated by translations  $x=\tilde{x}+b,$ the algebra of invariants becomes simpler:
$${\bf k}\left({C^0}_d\right)^{G_0}={\bf k}(z_2,z_3,\ldots,z_d).$$
 
\begin{te}
$(i)$ For arbitrary set of   $d-1$  numbers  $j_2,$$j_3,\ldots,j_d$ there exists a curve  $C$  such that  
$z_i(C)=j_i.$ 

$(ii)$ For two curves   $C$ and  $C'$   the equalities  $z_i(C)=z_i(C')$ hold for  $ 2 \leq i \leq d,$  if and only if these curves are $G_0$-isomorphic.
\end{te}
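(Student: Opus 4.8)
The plan rests on a concrete description of the $z_i$: they are exactly the coefficients of a curve $C_d^0$ written in \emph{reduced form}, i.e.\ with vanishing subleading coefficient. Equivalently, when $a_0=1$ the assignment $(a_1,a_2,\dots,a_d)\mapsto(a_1,z_2,\dots,z_d)$ is a unitriangular polynomial change of variables, since $z_i=a_i+(\text{a polynomial in }a_1,\dots,a_{i-1})$; in particular it is a bijection.

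For part $(i)$ I would simply exhibit the curve. From the defining formula for $z_i$, if $a_0=1$ and $a_1=0$ then every summand with $k\geq 1$ vanishes and the trailing term is $0$, leaving $z_i=a_i$ for $2\leq i\leq d$. Hence the curve
\[
C:\quad y^2=x^d+\sum_{i=2}^{d} j_i\binom{d}{i}x^{d-i}
\]
satisfies $z_i(C)=j_i$ for all $i$, which proves $(i)$.

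For part $(ii)$, the direction $(\Leftarrow)$ is immediate from the description ${\bf k}(C_d^0)^{G_0}={\bf k}(z_2,\dots,z_d)$ recorded above: each $z_i$ is a $G_0$-invariant, so $G_0$-isomorphic curves have the same $z_i$. For the converse I would normalize. Starting from $C:y^2=x^d+d\,a_1x^{d-1}+\cdots$, apply the translation $x\mapsto x-a_1$; the coefficient of $x^{d-1}$ in the transformed equation becomes $d(-a_1)+d\,a_1=0$, so $C$ is $G_0$-isomorphic to a curve $\bar C$ in reduced form. Since the $z_i$ are $G_0$-invariant and, for a reduced-form curve, equal the coefficients themselves (the computation in the proof of $(i)$), the curve $\bar C$ is completely determined by $C$: its coefficients are precisely $z_2(C),\dots,z_d(C)$. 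Applying the same normalization to $C'$ gives $\bar C'$ with coefficients $z_2(C'),\dots,z_d(C')$. Thus $z_i(C)=z_i(C')$ for all $i$ forces $\bar C=\bar C'$, and $C,C'$ then both lie on the $G_0$-orbit of this common reduced curve, hence are $G_0$-isomorphic.

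The only step needing genuine verification is the normalization: that $x\mapsto x-a_1$ really clears the subleading term and leaves the $z_i$ as the remaining coefficients — equivalently, the unitriangularity claimed in the first paragraph. This is a routine binomial expansion, but it is characteristic-sensitive, requiring $d\neq 0$ in ${\bf k}$ so that the subleading coefficient can indeed be eliminated; once this is granted, the rest of the argument is formal.
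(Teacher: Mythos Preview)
Your argument is correct and follows essentially the same route as the paper: for $(i)$ you exhibit the reduced curve with $a_1=0$ (exactly as the paper does), and for $(ii)$ you use that every $G_0$-orbit contains a unique reduced-form representative whose coefficients are the $z_i$, which is the same content as the paper's comparison of formulas~(\ref{1}) and~(\ref{2}). Your version is a bit more explicit in stating the easy direction $(\Leftarrow)$ and in flagging the characteristic hypothesis needed for the normalization step.
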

\begin{proof}
$(i).$  Consider the system of equations 
$$
\left\{
\begin{array}{l}
{a_{2}} - {a_{1}}^{2}=j_2
\\
{a_{3}} + 2\,{a_{1}}^{3} - 3\,{a_{1}}\,{a_{2}}=j_3
\\
{a_{4}} - 3\,{a_{1}}^{4} + 6\,{a_{1}}^{2}\,{a_{2}} - 4
\,{a_{1}}\,{a_{3}}=j_4
\\
\ldots \\
\displaystyle a_d+\sum_{k=1}^{d-2} (-1)^k {d \choose k} a_{d-k}  a_1^k  +(d-1)(-1)^{d+1} a_1^d=j_d
\end{array}
\right.
$$
By solving  it  we obtain  
\begin{gather}\label{2}
a_n=j_n+\sum_{i=2}^n {n \choose i} a_1^k j_{n-k}
\end{gather}
Put  $a_1=0$  we get  $a_n=j_n$,\, i.e. the curve 
$$
C: y^2=x^d+{d \choose 2} j_2 x^{d-2}+\cdots +j_d,
$$
is desired  one.

$(ii).$ We may assume, without loss of generality,    that the curve  $C$  has the form
$$
C: y^2=x^d+{d \choose 2} j_2 x^{d-2}+\cdots +j_d.
$$
Suppose that for a curve  
$$
C': y^2=x^d+da_1 x^{d-1}+\cdots +a_d =x^d+\sum_{i=1}^d a_d {d \choose i} x^{d-i}.
$$
holds $z_i(C')=z_i(C)=j_i.$
Comparing  (\ref{2})  with  (\ref{1})  we deduce that the curve   $C'$ is obtained from the curve  $C$  by the  translation  $x+a_1.$ 
\end{proof}

\section{General case  and invariants of  $y^2+g(x)y=f(x)$}

Consider the  vector ${\bf k}$-space  $T_d$ of ternary form of degree  $d:$
$$ 
u(x,y,z)=\sum_{i+j\leq d} \, \frac{d!}{i! j! (d{-}(i+j))!}a_{i, j}\, x^{d-(i+j)} y^i z^j,
$$
where  $a_{i, j} \in {\bf k}.$
Let us identify in the natural way the algebra of rational function  ${\bf k}(T_d)$  on the vector space $T_d$ with the algebra  of polynomials of the $\displaystyle \frac{1}{2} (d+1)(d+2)$ variables. The natural action of the group  $GL_3$ on   $T_d$  induced the action of  $GL_3$ (and the Lie algebra $\mathfrak{gl_{3}}$) on   ${\bf k}[T_d]$.  
The corresponding algebra of invariants   ${\bf k}(T_d)^{GL_3}={\bf k}(T_d)^{\mathfrak{gl_{3}}}$  is called the algebra of   $GL_3$-invariants (or absolute invariants) of ternary form of degree  $d.$  The following statement  holds:
\begin{te}
$$ {\bf k}(T_d)^{GL_3}=\ker D_1 \cap \ker D_2  \cap \ker \hat{D}_1 \cap \ker \hat{D}_2 \cap  \ker E_1 \cap \ker E_2 \cap \ker E_3$$
where
$$
\begin{array}{ll}
\displaystyle \! \! D_1(a_{i,j})=i\,a_{i{-}1,j}, & \! \! D_2(a_{i,j})=j\,a_{i{+}1,j{-}1},\\
\displaystyle \! \! \hat D_1(a_{i,j})=(n-(i+j))\,a_{i{+}1,j}, & \! \!  \hat D_2(a_{i,j})=i\,a_{i{-}1,j{+}1},\\
   \hat{D}_3(a_{i,j})=(n-(i+j)) a_{i,j+1}, & \! \! D_3(a_{i,j})=j a_{i,j-1}, \\
\! \! E_1(a_{i,j})=(n-(2i+j)) a_{i,j}, & \! \! E_2(a_{i,j})=i a_{i,j},
\end{array}
$$
$$E_3(a_{i,j})=j a_{i,j}. $$
\end{te}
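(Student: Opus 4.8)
The plan is to recognize the seven (in fact nine) derivations listed in the statement as the infinitesimal form of the $GL_3$-action on ${\bf k}[T_d]$, and then to invoke the identity ${\bf k}(T_d)^{GL_3}={\bf k}(T_d)^{\mathfrak{gl}_3}$ recalled in the Introduction. For $X\in\mathfrak{gl}_3$ and the one-parameter subgroup $g(t)=\exp(tX)$ of $GL_3$, the coefficients $\tilde a_{i,j}(t)$ of the transformed form $u\circ g(t)$ are polynomials in $t$ that are linear in the $a_{k,l}$; setting $\rho(X)(a_{i,j})$ equal to the value at $t=0$ of $\frac{d}{dt}\tilde a_{i,j}(t)$ defines a derivation $\rho(X)$ of ${\bf k}[T_d]$, and $X\mapsto\rho(X)$ is a Lie algebra homomorphism $\mathfrak{gl}_3\to\mathrm{Der}({\bf k}[T_d])$ (the order-reversal in $u\mapsto u\circ g$ is undone by the order-reversal of pullback to the coordinate ring). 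By definition ${\bf k}(T_d)^{\mathfrak{gl}_3}=\{f\in{\bf k}(T_d):\rho(X)f=0\text{ for all }X\}$, and this equals the intersection of $\ker\rho(X)$ as $X$ runs over any spanning set of $\mathfrak{gl}_3$.

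First I would compute $\rho$ on the standard basis of $\mathfrak{gl}_3$, exactly as in the proof of Theorem~2.1 but for $GL_3$ in place of the group of translations. For an off-diagonal elementary matrix $E_{pq}$ the subgroup $\exp(tE_{pq})$ acts on $u$ by the shear replacing the $p$-th of the variables $x,y,z$ by itself plus $t$ times the $q$-th one and fixing the others; substituting this into the multinomially normalized form and extracting the coefficient of $t$ makes the binomial factors cancel against the normalization, and one reads off the six operators $D_1,D_2,D_3,\hat D_1,\hat D_2,\hat D_3$ --- for instance the substitution $y\mapsto y+tx$ yields $\rho(E_{21})(a_{i,j})=(d-(i+j))\,a_{i+1,j}$, which is $\hat D_1$, and the other five are analogous. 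A diagonal $E_{pp}$ rescales one variable and contributes the weight operator $a_{i,j}\mapsto w_p\,a_{i,j}$ with $w_1=d-(i+j)$, $w_2=i$, $w_3=j$; these span the same $3$-dimensional subspace of $\mathrm{Der}({\bf k}[T_d])$ --- the image of the diagonal Cartan subalgebra --- as $E_1=w_1-w_2$, $E_2=w_2$, $E_3=w_3$. Hence the nine derivations $D_1,D_2,D_3,\hat D_1,\hat D_2,\hat D_3,E_1,E_2,E_3$ span $\rho(\mathfrak{gl}_3)$ (they comprise all six $\rho(E_{pq})$ with $p\neq q$ together with a basis of the image of the Cartan), so the intersection of their nine kernels is precisely ${\bf k}(T_d)^{\mathfrak{gl}_3}={\bf k}(T_d)^{GL_3}$.

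It then remains to discard $D_3$ and $\hat D_3$. For derivations $D,D'$ of a field one has $\ker D\cap\ker D'\subseteq\ker[D,D']$; and a short direct computation from the explicit formulas gives $[D_1,D_2]=D_3$ and $[\hat D_2,\hat D_1]=\hat D_3$ (reflecting the relations $E_{13}=[E_{12},E_{23}]$ and $E_{31}=[E_{32},E_{21}]$ in $\mathfrak{gl}_3$ and the fact that $\rho$ is a homomorphism). Therefore $\ker D_1\cap\ker D_2\subseteq\ker D_3$ and $\ker\hat D_1\cap\ker\hat D_2\subseteq\ker\hat D_3$, so the intersection of the kernels of the seven derivations $D_1,D_2,\hat D_1,\hat D_2,E_1,E_2,E_3$ coincides with the intersection over all nine, and the theorem follows. (The same type of computation gives $[D_1,\hat D_1]=E_1$ and $[D_2,\hat D_2]=E_2-E_3$, so the family can in fact be trimmed further; but the stated seven-derivation form is what we establish.)

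I expect the only laborious part to be the first step: pushing each of the six shears through the multinomially normalized ternary form and checking that the $t$-linear term collapses to exactly the printed operators $D_k,\hat D_k$, uniformly in $i,j$ --- the kind of index-and-coefficient bookkeeping that already appears, in the binary case, in the proof of Theorem~2.1. Once the explicit shape of $\rho$ on the elementary matrices is in hand, the bracket identities and the spanning-set argument are formal and short.
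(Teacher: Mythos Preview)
Your argument is correct and follows essentially the same strategy as the paper: identify the listed derivations as the infinitesimal $\mathfrak{gl}_3$-action on the coefficient ring and then invoke ${\bf k}(T_d)^{GL_3}={\bf k}(T_d)^{\mathfrak{gl}_3}$. The technical device differs slightly: the paper extends each Lie-algebra generator (written as a vector field like $-y\,\partial/\partial x$) to a derivation of ${\bf k}[T_d,x,y,z]$ and uses the classical fact that the generic form $u(x,y,z)$ is a covariant, so the extended derivation must annihilate $u$; reading off the coefficient of each monomial then forces exactly the formulas $D_1(a_{i,j})=i\,a_{i-1,j}$, etc. Your one-parameter-subgroup computation is the group-theoretic counterpart of this and yields the same answer. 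One point on which your write-up is actually more complete than the paper's proof: the paper lists all nine operators but never explains inside the proof why $D_3$ and $\hat D_3$ may be omitted from the intersection; the bracket relation $[D_1,D_2]=D_3$ appears only parenthetically in the proof of the next theorem. Your explicit use of $\ker D\cap\ker D'\subseteq\ker[D,D']$ together with $[D_1,D_2]=D_3$ and $[\hat D_2,\hat D_1]=\hat D_3$ fills that gap cleanly.
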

\begin{proof}
The Lie algebra  $\mathfrak{gl}_3$ acts on the vector space of ternary form  $T_d$ by derivations,namely 
$$
\begin{array}{ll}
\displaystyle D_1=-y \frac{\partial}{\partial x}, &  \displaystyle D_2=-z \frac{\partial}{\partial y}, \\
E_1=-x\frac{\partial}{\partial x}, &E_2= -y\frac{\partial}{\partial y},   \\
\displaystyle \hat D_1=-x \frac{\partial}{\partial y}, &  \displaystyle \hat D_2=-y \frac{\partial}{\partial z}, \\
 \displaystyle D_3=-z \frac{\partial}{\partial x}, & E_3= -\displaystyle  z \frac{\partial}{\partial z},
\end{array}
$$
$$
\begin{array}{lll}
 \displaystyle &  \displaystyle \hat D_3=-x \frac{\partial}{\partial z}.  &
\end{array}
$$
To extend  the actions  $\mathfrak{gl}_3$  to  the algebra  ${\bf k}(T_d)$ we use the well-known   fact  of classical invariant theory  that the generic form  $u(x,y,z)$ is a covariant. It means that any of above derivation (considered as derivation of ${\bf k}[T_d,x,y,z]$) must kill the form. 
In particular,  for the derivation  $D_1$  we have
\begin{gather*}
D_1(u(x,y,z))=
\sum_{i+j\leq n} \, \frac{n!}{i! j! (n{-}(i+j))!}(D_1(a_{i,j}) x^{n-(i+j)} y^i z^j+
+a_{i,j} D_1(x^{n-(i+j)} y^i z^j))=\\
=D_1(a_{0,1}) x^{n-1} x_3+\cdots+D_1(a_{0,n}) \frac{1}{n!} z^n+ 
+\sum_{\begin{array}{c} \mbox{\small \it i+j}\leq n \\  i>0 \end{array} } \Bigl( D_1(a_{i,j})-i\,a_{i{-}1,j}\Bigr) x^{d-(i+j)} y^i z^j.
\end{gather*}
It following that the equality  $D_1(u(x,y,z))=0$  is possible only if all coefficients are equal to zero. Therefore we get  $D_1(a_{0,j})=0$ for all  $0\leq j \leq n,$ and    $D_1(a_{i,j})=i\,a_{i{-}1,j} $ as required.  In  an exactly similar way  we will obtain actions on ${\bf k}(T_d)$ for  the rest derivations. 
\end{proof}

\noindent
{\bf Corollary.}  ${\rm tr \,deg}_k\,\,{\bf k}(T_{d})^{GL_3} \leq \displaystyle \frac{1}{2} (d+1)(d+2)-7.$
\begin{proof}
It follows from the fact  that for a nonzero derivation $D$ of  polynomial  ring $R$  of $n$  variables ${\rm tr \,deg} \ker D \leq n-1,$  see \cite{Now}, Proposition 7.1.1.
\end{proof}

An obvious consequence of the theorem is the following:  

\begin{te} For  affine algebraic curves   $d$

$$
C: \sum_{i+j=d} \frac{d!}{i! j! (d-(i+j))!} a_{i,j} x^{d-(i+j)}y^i=0, a_{d,0}=0, a_{i,j} \in {\bf k}, {\rm char} {\bf k} >0,
$$
the following holds:

(i) if $a_{d,0} \neq 0$  and $a_{0,0} \neq 0$   then
$$
{\bf k}(C)^G=\ker D_1 \cap \ker D_2  \cap \ker \hat{D}_1 \cap  \ker E_1 \cap \ker E_2 ,  
$$

(ii) if $a_{d,0} =0 0$ $a_{0,0} \neq 0$  then

$$
{\bf k}(C)^G=\ker D_2 \cap \ker D_3 \cap \ker \hat{D}_1 \cap  \ker E_1 \cap \ker E_2 ,  
$$
where
\begin{align*}
D_1(a_{i,j})&=i a_{i-1,j},D_2(a_{i,j})=j a_{i+1,j-1}, \hat{D}_1(a_{i,j})=(d-(i+j))a_{i+1,j},\\
&\\
E_1(a_{i,j})&=(d-(i+j)) a_{i,j}, E_2(a_{i,j})=i a_{i,j},D_3(a_{i,j})=j a_{i,j-1}.
\end{align*}
\end{te}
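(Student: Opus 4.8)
The plan is to run, for the group $G$ attached to the curve $C$, the same infinitesimal computation already used in the motivating example of the Introduction and in the proof of Theorem 3.1. Differentiating the invariance identity $\phi(\tilde a_{i,j})=\phi(a_{i,j})$ with respect to each parameter of $G$ and evaluating at the identity shows that a rational function of the coefficients lies in ${\bf k}(C)^G$ precisely when it is annihilated by every derivation of ${\bf k}(C)$ arising from $\mathfrak g:={\rm Lie}\,G$; this is the step where one uses that $G$ is connected and generated by its one-parameter subgroups, under the characteristic hypothesis kept throughout. So the theorem reduces to two things: (a) identifying $\mathfrak g$ as an explicit subalgebra of $\mathfrak{gl}_3$, and (b) writing down the action on the coefficients $a_{i,j}$ of a generating set of $\mathfrak g$, by the same ``specification'' of the $\mathfrak{gl}_3$-derivations that was carried out in Theorem 3.1.

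For (a) I would homogenise the equation of $C$ by the variable $z$, which embeds $C$ into the space $T_d$ of ternary forms and realises the affine group $A(2)$ inside $GL_3$ as the subgroup of matrices with last row $(0,0,1)$. Its Lie algebra is the span of those six of the nine generators of Theorem 3.1 that do not differentiate with respect to $z$, namely $D_1,D_2,D_3,\hat D_1,E_1,E_2$, and by the bracket relation $[D_1,D_2]=D_3$ it is already generated by $D_1,D_2,\hat D_1,E_1,E_2$. Under the hypotheses of (i) one has $G=A(2)$, so $\mathfrak g$ is exactly this algebra and the five derivations listed suffice. Under the hypotheses of (ii) the coefficient constraint forces the projective closure of $C$ to pass through the point $[0:1:0]$, hence $G$ is the subgroup of $A(2)$ fixing that point — the affine transformations $(x,y)\mapsto(ax+e,\,cx+dy+f)$ sending vertical lines to vertical lines — whose Lie algebra is spanned by $D_2,D_3,\hat D_1,E_1,E_2$; here $D_3$ can no longer be recovered from the others, while $D_2=-[D_3,\hat D_1]$ stays redundant but is kept for uniformity.

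For (b), the proof of Theorem 3.1 gives that each of these $\mathfrak{gl}_3$-generators, regarded as a derivation of ${\bf k}[T_d,x,y,z]$ forced to annihilate the generic form, acts on the coefficients by the formulas displayed in the statement; in each of cases (i), (ii) the chosen generators map the set of coefficients $a_{i,j}$ occurring in $C$ into itself (out-of-range indices being suppressed by the vanishing factor $d-(i+j)$), so they descend to genuine derivations of ${\bf k}(C)$, and ${\bf k}(C)^G={\bf k}(C)^{\mathfrak g}$ is the intersection of their kernels. The step I expect to be the real work is verifying that $\mathfrak g$ is \emph{exactly}, not merely contained in, the listed span: one has to check that each named one-parameter subgroup genuinely preserves the algebraic shape of $F$ while no transformation outside the span does, which is exactly where the nonvanishing hypotheses on $a_{0,0}$ and $a_{d,0}$ enter, and to confirm closure of ${\bf k}(C)$ under the specified operators. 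Justifying the group-to-Lie-algebra passage in characteristic $p$ is a secondary point, to be handled as in Theorem 3.1 by working directly with the differentiated invariance relation.
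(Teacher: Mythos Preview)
Your proposal is correct and follows essentially the same route as the paper: homogenise to pass to the associated projective curve in $T_d$, identify $G$ with the subgroup of $GL_3$ fixing the last row (respectively, in case~(ii), the smaller subgroup with $\beta=0$), and then invoke the derivations computed in Theorem~3.1, using the bracket $[D_1,D_2]=D_3$ to drop $D_3$ in case~(i). The paper's own proof is terser---it simply writes down the admissible projective substitutions in each case and cites the commutator---while you supply the geometric interpretation of case~(ii) via the point $[0{:}1{:}0]$ and the extra observation $D_2=-[D_3,\hat D_1]$, but the underlying argument is the same.
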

\begin{proof}
$(i)$   Consider the associate projective plane curve in $\mathbb{P}^2:$
$$
\sum_{i+j=d} \frac{d!}{i! j! (d-(i+j))!} a_{i,j} X^{d-(i+j)}Y^iZ^j=0, a_{d,0} \neq 0,  a_{0,0} \neq 0. 
$$
The  transformations $X \mapsto \alpha X+ \beta Y+ b Z,$ $Y \mapsto \gamma X+ \delta Y+a Z,$  $Z \mapsto Z$ generate of a subgroup of $GL_3$ which preserve the algebraic form of the equation  of the curve. Therefore the algebra of invariants of the curve (and corresponding affine curve) coincides with the intersection of the kernels of the five derivations $ D_1, D_2, \hat{D}_1, E_1, E_2,$  ($[D_1,D_2]=D_3$).

$(ii).$ For  this case  the   transformations are as follows: $X \mapsto \alpha X+  b Z,$ $Y \mapsto \gamma X+ \delta Y+a Z,$  $Z \mapsto Z $ and  we  have to  exclude  the  derivation $D_1.$
\end{proof}

For  the curve 
\begin{gather*}
\mathcal{C'}_{d}:\frac{d(d-1)}{2} a_{2,d-2} y^2+\sum_{i=0}^{d-1}\frac{ d!}{i!(d-(1+i))!}a_{1,i}x^{d-(i+1)}y+\sum _{i=0}^{d}{\frac {d!\,}{i!\, \left( d-i
 \right) !}a_{{0,i}}{x}^{d-i}}=0,
\end{gather*}
and  for the group $G$ generated by $x \mapsto \alpha x+a, y \mapsto \beta y+b$ we have 
$$
{\bf k}(\mathcal{C'}_{d})^G=\ker D_2 \cap \ker D_3 \cap \ker \hat{D}_1 \cap  \ker E_1 \cap \ker E_2 ,  
$$
and ${\rm tr \,deg}_k\,\,{\bf k}(\mathcal{C'}_{d})^G \leq  2d-3.$

{\bf Example.} Let us calculate the invariants of curve  $C'_5$
\begin{gather*}
10\,{y}^{2}+(5\,a_{{1,0}}{x}^{4}+20\,a_{{1,1}}{x}^{3}+30\,a_{{1,2}}{x
}^{2}+20\,a_{{1,3}}x+5\,a_{{1,4}})y=\\
={x}^{5}+5\,a_{{0,1}}{x}^{4}+10\,a_{{0,2}}{x}^{3}+10\,a_{{0,3}}{x}^{2}+5\,a_{{0,4}}x+a_{{0,5}},
\end{gather*}
with respect to the group  $G_0$ generated by the translations $x=\tilde{x}+a, y=\tilde{y}+b.$  Theorem 3.2 implies  that  $C_5^G=\ker D_3 \cap \ker D_2,$ where the derivations  $D_2, D_3$ act by
\begin{align*}
 &D_{{2}} \left( a_{{1,1}} \right) =0,D_{{2}} \left( a_{{1,0}} \right) =0,D_{{2}} \left( a_{{0,1}} \right) =-a_{{1,0}},D_{{2}}
 \left( a_{{1,2}} \right) =0,D_{{2}} \left( a_{{0,2}} \right) =-2\,a_{
{1,1}},\\&D_{{2}} \left( a_{{0,3}} \right) =-3\,a_{{1,2}},D_{{2}} \left( 
a_{{0,5}} \right) =-5\,a_{{1,4}},D_{{1}} \left( a_{{0,4}} \right) =-4
\,a_{{1,3}},D_{{2}} \left( a_{{1,3}} \right) =0,D_{{2}} \left( a_{{1,4
}} \right) =4.
\end{align*}
and 
\begin{align*}
&D_{{3}} \left( a_{{1,2}} \right) =2\,a_{{1,1}},D_{{3}} \left( a_{{1,1}} \right) =a_{{1,0}},D_{{2}} \left( a_{{1,0}} \right) 
=0,D_{{2}} \left( a_{{0,2}} \right) =2\,a_{{0,1}},D_{{3}} \left( a_{{0
,1}} \right) =1,\\
&D_{{2}} \left( a_{{0,3}} \right) =3\,a_{{0,2}},D_{{3}}
 \left( a_{{0,5}} \right) =5\,a_{{0,4}},D_{{2}} \left( a_{{0,4}}
 \right) =4\,a_{{0,3}},D_{{2}} \left( a_{{1,3}} \right) =3\,a_{{1,2}},
D_{{3}} \left( a_{{1,4}} \right) =4\,a_{{1,3}}.
\end{align*}
By using the Maple command {\tt pdsolve()}  we obtain that 
$$
{\bf k}(C'_5)^{G_0}={\bf k}(g_1,g_2,g_3,g_4,g_5,g_6,g_7),{\bf k}[C'_5]^{G_0}={\bf k}[g_1,g_2,g_3,g_4,g_5,g_6,g_7],
$$
where
\begin{gather*}
g_1=a_{1,0},\\
g_2={a_{{1,0}}}^{2}a_{{0,2}}+{a_{{1,1}}}^{2}-2\,a_{{1,1}}a_{{1,0}}a_{{0,1}},\\
g_3=a_{{1,2}}-2\,a_{{1,1}}a_{{0,1}}+a_{{1,0}}a_{{0,2}},\\
g_4=6\,{a_{{1,1}}}^{2}a_{{0,1}}a_{{1,0}}-4\,{a_{{1,1}}}^{3}-3\,{a_{{1,0}}}
^{2}a_{{0,2}}a_{{1,1}}-3\,a_{{1,2}}{a_{{1,0}}}^{2}a_{{0,1}}+3\,a_{{1,0
}}a_{{1,1}}a_{{1,2}}+a_{{0,3}}{a_{{1,0}}}^{3},\\
g_5=2\,{a_{{1,1}}}^{3}-3\,a_{{1,0}}a_{{1,1}}a_{{1,2}}+a_{{1,3}}{a_{{1,0}}}
^{2},\\
g_6=3\,{a_{{1,0}}}^{4}{a_{{0,2}}}^{2}+{a_{{1,0}}}^{4}a_{{0,4}}-12\,{a_{{1,0
}}}^{3}a_{{1,1}}a_{{0,1}}a_{{0,2}}-4\,{a_{{1,0}}}^{3}a_{{1,1}}a_{{0,3}
}-4\,{a_{{1,0}}}^{3}a_{{1,3}}a_{{0,1}}-\\-12\,{a_{{1,0}}}^{3}a_{{0,2}}a_{
{1,2}}+12\,{a_{{1,1}}}^{2}{a_{{1,0}}}^{2}{a_{{0,1}}}^{2}+24\,{a_{{1,1}
}}^{2}{a_{{1,0}}}^{2}a_{{0,2}}+4\,a_{{1,1}}{a_{{1,0}}}^{2}a_{{1,3}}+36
\,a_{{1,1}}{a_{{1,0}}}^{2}a_{{0,1}}a_{{1,2}}-\\-24\,a_{{1,0}}a_{{1,2}}{a_
{{1,1}}}^{2}-48\,a_{{1,0}}{a_{{1,1}}}^{3}a_{{0,1}}+24\,{a_{{1,1}}}^{4},\\
g_7={a_{{1,0}}}^{4}{a_{{0,2}}}^{2}+{a_{{1,0}}}^{3}a_{{1,4}}-4\,{a_{{1,0}}}
^{3}a_{{1,1}}a_{{0,1}}a_{{0,2}}+6\,{a_{{1,0}}}^{3}a_{{0,2}}a_{{1,2}}+4
\,{a_{{1,1}}}^{2}{a_{{1,0}}}^{2}{a_{{0,1}}}^{2}-\\-4\,{a_{{1,1}}}^{2}{a_{
{1,0}}}^{2}a_{{0,2}}-12\,a_{{1,1}}{a_{{1,0}}}^{2}a_{{0,1}}a_{{1,2}}-4
\,a_{{1,1}}{a_{{1,0}}}^{2}a_{{1,3}}+4\,{a_{{1,0}}}^{2}a_{{0,1}}-4\,a_{
{1,1}}a_{{1,0}}+\\+12\,a_{{1,0}}a_{{1,2}}{a_{{1,1}}}^{2}+8\,a_{{1,0}}{a_{
{1,1}}}^{3}a_{{0,1}}-8\,{a_{{1,1}}}^{4}
\end{gather*}



\begin{thebibliography}{15}

\bibitem{Bro} A. Brower, Invariants of the ternary quartic, http://www.win.tue.nl/$\sim$aeb/math/ternary\_quartic.html

\bibitem{B_X} L. Bedratyuk, G.Xin,   MacMahon Partition Analysis and the Poincar\'e series of the algebras of invariants of ternary  and  quaternary forms, Linear and Multilinear Algebra, (2011)

\bibitem{Sylv86} J. Silverman, 
The arithmetic of elliptic curves,
Graduate Texts in Mathematics, 106, Springer-Verlag, (1986)

\bibitem{Stur} B. Sturmfels,
Algorithms in invariant theory,
 Texts and Monographs in Symbolic Computation. Wien: Springer.(2008)
\bibitem{Now}  A. Nowicki, Polynomial derivation and their Ring of Constants.--UMK: Torun,--1994.

\bibitem{Aut}
 L. Bedratyuk,  {\it On complete system of invariants for the binary form of degree 7},  J. Symb. Comput., {\bf 42}, (2007) 935-947.

\end{thebibliography}
\end{document}